\documentclass{amsart}
\usepackage{amsthm,amssymb,amscd,enumerate}

\usepackage{color}
\usepackage[normalem]{ulem}

\vfuzz2pt 
\hfuzz2pt 
\newtheorem{thm}{Theorem}[section]
\newtheorem{cor}[thm]{Corollary}
\newtheorem{lem}[thm]{Lemma}
\newtheorem{prop}[thm]{Proposition}
\theoremstyle{definition}

\newtheorem{example}{Example}
\theoremstyle{remark}
\newtheorem{rem}[thm]{Remark}
\numberwithin{equation}{section}

\begin{document}

\title[Index 3 Blocking Sets]{Blocking Sets of Index Three}
\author{William E. Cherowitzo}
\address{Department of Mathematical and Statistical Sciences \\
         University of Colorado Denver\\
         Campus Box 170\\
         P.O. Box 173364\\
         Denver, Colorado 80217-3364 \\
         USA}
\email{william.cherowitzo@ucdenver.edu}
\author{Leanne D. Holder}
\address{Department of Mathematics \\
         Rose-Hulman Institute of Technology \\
         5500 Wabash Avenue \\
         Terre Haute, Indiana 47803 \\
         USA}
\email{holder1@rose-hulman.edu}

\subjclass{Primary 51E20, 51E21;  Secondary 05B25}
\keywords{R\'edei Blocking Sets, Projective Triangle, Projective Triad, PG(2,7)}

\begin{abstract}
In this note we will provide proofs for the various statements that have been made in the literature about blocking sets of index three. Our aim is to clarify what is known about the characterization of these sets. Specifically, we provide constructions for all R\'edei blocking sets in PG($2,q$) of index three and explicitly determine all blocking sets of index three in PG($2,7$).
\end{abstract}
\maketitle
\section{Definitions}

A \textit{proper blocking set} in a plane is a set of points $S$ in that plane such that every line of the plane meets $S$ and $S$ contains no line. A blocking set is \textit{minimal} if it does not properly contain a smaller blocking set. The \textit{index} of a blocking set was introduced by Beutelspacher and Eugeni \cite{BeEu:85} as the minimum number of lines that can cover the blocking set. It is easy to see that the index of a proper blocking set is at least 3. In this case, there are only two possibilities: the three lines can form a triangle or they can be concurrent.

Following Cameron \cite{PJC:85} we define, for an abelian group $G$ of order $n$ and a positive integer $m$, the relation $G \rightarrow m$ if there are nonempty subsets $A$, $B$ and $C$ of $G$ such that
\begin{enumerate}[(i)]
\item $0 \notin A + B + C$ (or $1 \notin ABC$ if $G$ is written multiplicatively);
\item $(A,B,C)$ is maximal subject to (i); that is, no element can be adjoined to any of the three sets without violating (i);
\item $|A| + |B| + |C| = m$.
\end{enumerate}

\begin{example}[Sz\"onyi \cite{TS:91}] For odd primes $p$, let  $G = (\mathbb{Z}_p,+) \times (\mathbb{Z}_p,+)$, and let $A = \{(x,x^2)\colon x \in \mathbb{Z}_p \}$, $B = \{(-x,-x^2)\colon x \in \mathbb{Z}_p \},\mbox{ and } C = \{(0,y)\colon y \in \mathbb{Z}_p^*\}$. We can see that $A + B = G \setminus C$, $A + C = G \setminus A = G \setminus \{-B\}$ and $B + C = G \setminus B = G \setminus \{-A\}$. This proves that $G \rightarrow 3p - 1$.
\end{example}

\begin{prop}[Cameron \cite{PJC:85}] \label{Pr:arrow} Let $G$ be an abelian group of order $n$. Then
\begin{enumerate}[(a)]
\item if $G \rightarrow m$, $3(\sqrt{n + \frac{1}{4}}) \le m \le \frac{3n}{2}$.
\item if $\theta: G \rightarrow H$ is an epimorphism and $H \rightarrow m$ then,
\begin{equation*}
              G \rightarrow \frac{|G|m}{|H|}.
\end{equation*}
\item $|G| \rightarrow n+d$ for any proper divisor $d$ of $n$.
\end{enumerate}
\end{prop}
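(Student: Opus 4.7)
The plan is to treat the three parts in order, with (c) depending on (b).

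For the upper bound in (a), I would observe that whenever $a + b + c = 0$ with $a \in A$, $b \in B$, $c \in C$, condition (i) is violated; equivalently, for any fixed $a \in A$ the set $-a - B$ is disjoint from $C$, giving $|B| + |C| \le n$. The two cyclic analogues bound the remaining pairs, and summing yields $m \le 3n/2$. For the lower bound, I would use maximality: if $g \notin A$, then adjoining $g$ to $A$ violates (i), forcing $g = -(b+c)$ for some $b \in B$, $c \in C$, whence $n - |A| \le |B + C| \le |B|\,|C|$, together with its two cyclic analogues. Summing these three inequalities gives $|A|\,|B| + |B|\,|C| + |C|\,|A| \ge 3n - m$, and combining with the elementary $(|A|+|B|+|C|)^2 \ge 3(|A|\,|B| + |B|\,|C| + |C|\,|A|)$ gives $m^2 + 3m - 9n \ge 0$, whose positive root is $m \ge \tfrac{3}{2}(\sqrt{4n+1} - 1) = 3(\sqrt{n + 1/4} - \tfrac{1}{2})$.

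For part (b), I would take $A := \theta^{-1}(A')$, $B := \theta^{-1}(B')$, $C := \theta^{-1}(C')$, where $(A', B', C')$ realizes $H \rightarrow m$. Since each fiber of $\theta$ has size $|G|/|H|$, one gets $|A| + |B| + |C| = m\,|G|/|H|$ automatically, and condition (i) transfers by applying $\theta$. The main point is maximality: given $g \notin A$, one has $\theta(g) \notin A'$, so by maximality in $H$ there exist $b' \in B'$, $c' \in C'$ with $\theta(g) + b' + c' = 0$, and one must lift $b', c'$ to $b \in B$, $c \in C$ with $g + b + c = 0$ on the nose. I would do this by fixing arbitrary preimages $b_0, c_0$ of $b', c'$ and correcting $b_0$ by the kernel element $k := -(g + b_0 + c_0)$, which lies in $\ker \theta$ since $\theta(g) + b' + c' = 0$.

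For part (c), I would combine (b) with a direct small construction. Since $d$ is a proper divisor of $n$, there is a subgroup $K \le G$ of order $d$ (abelian groups have subgroups of every order dividing $|G|$), and the quotient $H := G/K$ has order $n/d \ge 2$. In $H$ the triple $A = B = \{0\}$, $C = H \setminus \{0\}$ satisfies (i) since $A + B + C = C$ omits $0$, and it is maximal: adjoining any $g \ne 0$ to $A$ or $B$ produces the sum $g + 0 + (-g) = 0$, while adjoining $0$ to $C$ produces $0 + 0 + 0 = 0$. Hence $H \rightarrow |H| + 1$, and applying (b) yields $G \rightarrow (n/d + 1)\,d = n + d$. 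The main obstacle I expect is the lifting step in (b): the maximality condition is stated in $G$, not $H$, and one has to verify that the freedom inside the fibers of $\theta$ is enough to solve the relation $g + b + c = 0$ exactly rather than only modulo $\ker \theta$.
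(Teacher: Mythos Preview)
Your argument is correct and follows essentially the same route as the paper: the same disjointness inequalities for the upper bound in (a), the same maximality-forces-coverage inequality $n \le |A||B| + |C|$ (and cyclic variants) together with $(x+y+z)^2 \ge 3(xy+yz+zx)$ for the lower bound, preimages under $\theta$ for (b), and the triple $(\{0\},\{0\},H\setminus\{0\})$ in a quotient of order $n/d$ for (c). Two remarks: your explicit kernel-correction in (b) is more careful than the paper's ``similar considerations'' and is exactly what is needed; and the lower bound both you and the paper actually derive is $m \ge 3\bigl(\sqrt{n+1/4}-\tfrac{1}{2}\bigr)$, which is $3/2$ short of the bound as stated in the proposition.
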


\begin{proof}
$ $
\begin{enumerate}[(a)]
\item We will write $G$ additively. If $a \in A$ then $-a \notin B + C$, so for each $b \in B, -a - b \notin C$. Thus, $|G| - |C| \ge |B|$, that is $ n \ge |B| + |C|$. Similarly we have, $n \ge |A| + |B|$ and $n \ge |A|+|C|$. Adding these three inequalities gives $3n \ge 2m$ and we obtain the upper bound. Now, let $|A| = x, |B| = y$ and $|C| = z$. Since $-C \subseteq G \setminus (A+B)$, we have  $|A+B \cup - C| = |A+B| + |-C| \le xy + z $. Suppose $t \in G$ and $t \notin A+B \cup - C$. Since $t \notin -C$ we have $-t \notin C$. Let $C^* = C \cup \{-t\}$ and note that $0 \notin A + B + C^*$ contradicting the maximality of $(A,B,C)$. Thus, $n = |A+B \cup - C| \le xy + z$. Adding the three similar inequalities gives $3n \le xy + xz + yz + x + y + z$. It is easy to see that the maximum value of the right hand side occurs when $x = y = z = \frac{m}{3}$ from which we obtain $3n \le \frac{1}{3}m^2 + m$ and the lower bound follows.
\item Let $A', B'$ and $C'$ realize $H \rightarrow m$, and let $A = \theta^{-1}(A'), B = \theta^{-1}(B')$ and $C = \theta^{-1}(C')$. If $0 = a + b + c$ with $a \in A, b \in B \mbox{ and } c \in C$ then $0 = \theta (0) = \theta (a+ b + c) = \theta (a) + \theta (b) + \theta(c) \in A' + B' + C'$. Maximality follows from similar considerations. The last statement is a consequence of the fact that the size of the kernel of $\theta = \frac{|G|}{|H|}$.
\item Notice that for any group $J$ of order $j$ we have $J \rightarrow j+1$ since (writing the group additively) $A = B = \{0\}, \mbox{ and } C = J \setminus \{0\}$ satisfies the definition. For any proper divisor $d$ of $n = |G|$, let $H$ be a subgroup of order $\frac{n}{d}$. We have that $H \rightarrow \frac{n}{d} + 1$ and since $G$ is abelian we can let $\theta : G \rightarrow H$ be the canonical epimorphism. The result now follows from (b).
\end{enumerate}
\end{proof}

Sz\"onyi \cite{TS:91} was able to provide a partial converse to the third part of the above proposition by using a theorem of Kneser. This result is

\begin{thm}[Kneser] Let $A, B$ be two complexes (i.e. subsets) of the abelian group $G$. Then there is a subgroup $H$ of $G$ such that
\begin{enumerate}[i)]
\item $A + B = A + B + H$;
\item $|A + B| \ge |A + H| + |B + H| - |H|$.
\end{enumerate}
\end{thm}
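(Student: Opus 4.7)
The plan is to prove the existence of $H$ by induction on $|B|$ (by the symmetry between $A$ and $B$, we may assume $|A| \ge |B|$). For the base case $|B| = 1$, take $H = \{0\}$: then $A + B$ is a translate of $A$, (i) reduces to the trivial equality $A + B = A + B$, and (ii) becomes $|A + B| \ge |A| + 1 - 1 = |A|$, which holds with equality.

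For the inductive step the main tool is the Dyson $e$-transform. For $e \in G$ set
\[
A_e = A \cup (B - e), \qquad B_e = B \cap (A + e).
\]
A direct check gives $|A_e| + |B_e| = |A| + |B|$ and $A_e + B_e \subseteq A + B$. If some $e$ makes $(B - e) \cap A$ a proper, nonempty subset of $B - e$, then $1 \le |B_e| < |B|$; applying the inductive hypothesis to $(A_e, B_e)$ yields a subgroup $H'$ satisfying (i) and (ii) for the transformed pair. The goal is then to manufacture from $H'$ and the stabilizer of $A + B$ a subgroup $H$ that satisfies (i) and (ii) for the original pair, exploiting the inclusion $A_e + B_e \subseteq A + B$ to transfer the lower bound.

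If instead no such $e$ exists, then every translate of $B$ is either contained in or disjoint from $A$. Picking $a \in A$ and $b_0 \in B$ and taking $e = b_0 - a$ shows $(B - e) \cap A \ne \emptyset$, hence $B - e \subseteq A$, i.e.\ $a + (B - b_0) \subseteq A$ for all such $a, b_0$. This forces $B - b_0 \subseteq \mathrm{Stab}(A) =: L$, so $B$ lies in a single coset of $L$, $A + B$ is a translate of $A$, and taking $H = L$ gives (i) and (ii) with equality in this degenerate case.

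The principal obstacle is the subgroup-matching step in the induction. The subgroup $H'$ returned by the inductive hypothesis stabilizes $A_e + B_e$, but not \emph{a priori} the larger set $A + B$; moreover $|A_e + H'|$ and $|B_e + H'|$ differ from $|A + H'|$ and $|B + H'|$ in ways that do not combine cleanly, because $A_e \supseteq A$ and $B_e \subseteq B$ move the two terms in opposite directions. Handling this coset bookkeeping--either by arguing that $H'$ must already stabilize $A + B$, or by replacing $H'$ with a carefully chosen subgroup of $\mathrm{Stab}(A + B)$ and verifying that the inequality survives the substitution--is the delicate core of the proof, and is precisely what distinguishes Kneser's theorem from the easier Cauchy--Davenport-type inequalities that it refines.
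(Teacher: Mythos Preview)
The paper does not prove Kneser's theorem; it is quoted as a classical result and used as a black box in the proof of the subsequent proposition, so there is no proof in the paper to compare against. Your outline follows the standard route---induction on $\min(|A|,|B|)$ via the Dyson $e$-transform---and your base case and degenerate case (every translate of $B$ either contained in or disjoint from $A$, forcing $B$ into a single coset of $L=\mathrm{Stab}(A)$, whence $A+B$ is a translate of $A$ and $H=L$ works with equality) are correct.

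But the proposal is not a proof: you explicitly stop at the main inductive step, labelling the passage from the subgroup $H'$ obtained for $(A_e,B_e)$ back to one that works for $(A,B)$ ``the delicate core of the proof'' and then not doing it. This is a genuine gap, and your own diagnosis of why it is not routine is accurate---$H'$ stabilizes $A_e+B_e$ but need not stabilize $A+B$, and the inclusions $A\subseteq A_e$, $B_e\subseteq B$ push $|A_e+H'|$ and $|B_e+H'|$ in opposite directions relative to $|A+H'|$ and $|B+H'|$, so the inductive inequality does not transfer naively. The standard remedy is to remove the freedom in $H$ at the outset: set $H=\mathrm{Stab}(A+B)$ so that (i) is automatic, replace $A,B$ by $A+H,B+H$, and pass to $G/H$, reducing (ii) to the assertion $|A+B|\ge|A|+|B|-1$ under the additional hypothesis that $A+B$ has trivial stabilizer; the $e$-transform induction there still requires a further case split on the stabilizer of $A_e+B_e$ and a short coset-counting argument when that stabilizer is nontrivial. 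Carrying this out, as in Kneser's original paper or the textbook expositions of Mann, Nathanson, or Tao--Vu, is exactly what your proposal identifies but omits.
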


We then have,

\begin{prop}[Sz\"onyi \cite{TS:91}] If $G \rightarrow m$ with $m > |G| + 1$, then there is a subgroup $H$ of $G$ such that $m = |G| + |H|$.
\end{prop}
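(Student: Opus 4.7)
The plan is to apply Kneser's theorem to the pair $(A,B)$ to extract a candidate subgroup $H$, and then to use the maximality clause in the definition of $G \rightarrow m$ together with the hypothesis $m > |G| + 1$ to pin down $|H|$ exactly.

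First I would fix a triple $(A,B,C)$ realizing $G \rightarrow m$ and recall, from the proof of Proposition~\ref{Pr:arrow}(a), that maximality forces $A+B = G \setminus (-C)$ (and the analogous statements for the other two pairs), so that $|A+B| = n - |C|$ where $n = |G|$. Take $H$ to be the full stabilizer of $A+B$ in $G$, so that $A+B+H = A+B$ and, by Kneser's theorem, $|A+B| \geq |A+H| + |B+H| - |H|$.

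The central claim, and the main obstacle, is that $A$, $B$, and $C$ must each be a union of cosets of $H$. To see this for $A$: for any $h \in H$, the enlarged set $A' = A \cup (A + h)$ satisfies $A' + B + C = (A+B+C) \cup (A+B+C+h)$, and since $A+B$ is $H$-stable we have $A+B+C+h = A+B+C$, which does not contain $0$; maximality of $(A,B,C)$ then forces $A + h \subseteq A$, hence $A + h = A$. The argument for $B$ is identical, and $C$ is $H$-invariant because its negative is the complement of the $H$-stable set $A+B$. Consequently $|A+H| = |A|$ and $|B+H| = |B|$, and Kneser's inequality collapses to
\[
n - |C| \;=\; |A+B| \;\geq\; |A| + |B| - |H|,
\]
which rearranges to the upper bound $m \leq n + |H|$.

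For the matching lower bound, I would pass to the quotient $\bar G = G/H$ of order $\bar n = n/|H|$, and let $\bar A, \bar B, \bar C$ denote the images. The maximality argument runs through modulo $H$, so $(\bar A, \bar B, \bar C)$ realizes $\bar G \rightarrow \bar m$ with $\bar m = m/|H|$, while the choice of $H$ as the full stabilizer makes the stabilizer of $\bar A + \bar B$ in $\bar G$ trivial. Kneser applied in $\bar G$ then gives $|\bar A + \bar B| \geq |\bar A| + |\bar B| - 1$, which combined with $|\bar A + \bar B| = \bar n - |\bar C|$ yields $\bar m \leq \bar n + 1$. On the other hand $m > n + 1$ gives $\bar m > \bar n$, and since both quantities are integers we conclude $\bar m \geq \bar n + 1$. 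Combining both bounds forces $\bar m = \bar n + 1$, that is, $m = n + |H|$, as desired.
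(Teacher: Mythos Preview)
Your argument is correct and follows the same skeleton as the paper's: apply Kneser to $(A,B)$, use maximality to force $A,B,C$ to be unions of $H$-cosets (and $-C = G\setminus(A+B)$), and then exploit divisibility by $|H|$ to pin down $m = n + |H|$. The only cosmetic difference is in the endgame: where the paper argues directly that strict inequality in Kneser would, by divisibility, force $|A+B| \ge |A|+|B|$ and hence the contradiction $m \le n$, you pass to $G/H$ and use integrality of $\bar m,\bar n$---note that your second invocation of Kneser in the quotient merely re-derives the upper bound $\bar m \le \bar n + 1$ already obtained, while the genuine lower bound $\bar m \ge \bar n + 1$ comes purely from $m>n$ together with $|H|\mid(m-n)$.
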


\begin{proof} Let $(A,B,C)$ realize $G \rightarrow m$ and choose a subgroup $H$ which satisfies Kneser's theorem with respect to $A,B$. As $A+B \neq G$ we have $H \neq G$. Since $A+B \cap -C = \emptyset$, if $H = 0$ we would have that $n \ge |A+B| + |-C| \ge |A| + |B| - 1 + |C| = m - 1$, a contradiction. By the maximality of $(A,B,C)$ we have that $A = A + H, B = B + H, C = C + H \mbox{ and } -C = G \setminus (A+B)$. Consider $A + B + C + H = (A + B + H) + C = A + B + C$, but by commutivity we have $A+B+(C+H) = A + (B+H) +C = (A+H) + B + C = A + B + C$. Since $H$ is a subgroup, $X \subseteq X + H$ for any subset $X$. If $ X \neq X + H$ then an element could be added to $X = A, B \mbox{ or } C$ violating the maximality of these sets.  Also $-C = -(C+H) = -C - H = -C + H$. We also note that if $G \neq (A + B) \cup (-C)$ we would have $n > |A+B| + |-C| \ge |A+B| + |-C| + |H| \ge |A| + |B| - |H| + |C| + |H| = m$ a contradiction (this follows since all the sets involved are unions of cosets of $H$ as is $G$). This same contradiction shows that $|A+B| = |A| + |B| - |H|$ and the conclusion follows.
\end{proof}

It follows from the details of this proof that,

\begin{cor} \label{Cor:Structure} If $G \rightarrow m$ with $m > |G| + 1$, then there exists a subgroup $H$ such that if $(A,B,C)$ realize $G \rightarrow m$, $A, B \mbox{ and } C$ are unions of cosets of $H$. \qed
\end{cor}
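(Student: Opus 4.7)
The plan is to observe that the corollary is essentially a repackaging of identities that were already produced in the course of proving the preceding proposition; no new argument is needed beyond reading off the appropriate formulas.

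Given any realization $(A,B,C)$ of $G \rightarrow m$, I would apply Kneser's theorem to the pair $A, B$ to obtain a subgroup $H$, exactly as in the proposition's proof. That proof established, by combining Kneser's first conclusion $A + B = A + B + H$ with the maximality of $(A,B,C)$, the identities $A = A + H$ and $B = B + H$. Each of these is tautologically the statement that the corresponding set is a union of cosets of $H$, since a subset $X \subseteq G$ satisfies $X = X + H$ precisely when it is $H$-invariant.

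For the third set, I would combine $-C = G \setminus (A+B)$ (which was also produced during that same proof, from the maximality argument) with $A + B + H = A + B$ to conclude that $-C = -C + H$, and hence, after negating, $C = C + H$. This completes the verification that all three sets are unions of cosets of $H$.

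There is no substantive obstacle; the corollary is really a bookkeeping consequence of the argument already carried out. The only point requiring a moment of care is the interpretation of the statement: the subgroup $H$ in principle depends on the chosen realization $(A,B,C)$, although its order $|H| = m - |G|$ is already forced by the preceding proposition and hence is the same for every realization.
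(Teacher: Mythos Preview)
Your proposal is correct and matches the paper's own treatment: the corollary is stated with a \qed\ immediately after the remark ``It follows from the details of this proof that,'' so the paper too simply reads off the identities $A = A+H$, $B = B+H$, $C = C+H$ established in the proof of the preceding proposition. Your derivation of $C = C+H$ via the complement $-C = G\setminus(A+B)$ is a harmless variant of the paper's route (which gets $C = C+H$ directly from maximality, then deduces $-C = -C+H$), and your closing remark about the quantifier on $H$ is a useful clarification that the paper leaves implicit.
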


\section{Blocking Sets of Index 3}

The interest in the concepts of the last section is due to the following theorem.

\begin{thm}[Cameron \cite{PJC:85}] \label{Th:index3} Let $S$ be a minimal blocking set in $PG(2,q)$ of index 3. Then one of the following holds:
\begin{enumerate}[(i)]
\item $|S| = 2q$;
\item $|S| = 3(q-1)$;
\item $|S| = 3q + 1 - m$; where $(GF(q),+) \rightarrow m$ and $q > 2$; (corrected from original)
\item $|S| = 3q - m$, where $GF(q)^{\times} \rightarrow m$.
\end{enumerate}
\end{thm}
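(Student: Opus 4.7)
The plan is to split on whether the three covering lines form a triangle or are concurrent, set up coordinates so that each of the three lines admits a natural parametrization by $GF(q)^*$ or $GF(q)$, and translate the blocking condition on lines that miss the configuration into a condition of the form $1 \notin A_1'A_2'A_3'$ (multiplicatively) or $0 \notin A_1' + A_2' + A_3'$ (additively) on the complements of the $S$-parts of the three lines. Minimality of $S$ will then become the maximality clause in the definition of $G \rightarrow m$, and counting will yield the four listed sizes.

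For the triangle case, I take the three lines as $X=0$, $Y=0$, $Z=0$ with vertices $V_i$ opposite $\ell_i$, parametrize $\ell_i \setminus \{V_j,V_k\}$ by $GF(q)^*$, and let $A_i \subseteq GF(q)^*$ record the non-vertex points of $S \cap \ell_i$ together with indicators $\epsilon_i \in \{0,1\}$ for whether $V_i \in S$. A short projective computation shows that lines avoiding every vertex correspond bijectively to triples $(u_1,u_2,u_3) \in (GF(q)^*)^3$ with $u_1u_2u_3=1$, so blocking them is equivalent to $1 \notin A_1'A_2'A_3'$ with $A_i' = GF(q)^* \setminus A_i$. Blocking and minimality together force $\epsilon_i = 1$ if and only if $A_i \neq GF(q)^*$, and I then analyze four subcases by the number of $V_i$ in $S$: zero gives all $A_i = GF(q)^*$ and $|S|=3(q-1)$, case (ii); exactly one, say $V_1$, forces $A_2=A_3=GF(q)^*$, while a tangent analysis (the only possible tangent at a point $u \in A_1$ is $\ell_1$ itself) pins $|A_1|=1$ and $|S|=2q$, case (i); exactly two forces $S$ to be a full line and is excluded; all three, combined with minimality, produces precisely a maximal triple realizing $GF(q)^* \rightarrow m$, yielding $|S|=3q-m$, case (iv).

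For the concurrent case, I move the common point $P$ to infinity with $\ell_\infty$ not among the three lines, so each $\ell_i$ becomes a vertical line $X=x_i$ for distinct $x_i \in GF(q)$. Since $q>2$, there exists a line through $P$ other than the three $\ell_i$ (for instance $\ell_\infty$), and such a line meets $S$ only possibly at $P$, forcing $P \in S$. Parametrizing $\ell_i \setminus \{P\}$ by the $Y$-coordinate and letting $A, B, C \subseteq GF(q)$ be the corresponding images of $S$, blocking of the non-vertical lines $Y = mX + c$ is equivalent, after eliminating the slope parameter $m$, to $0 \notin (x_2-x_3)A' + (x_3-x_1)B' + (x_1-x_2)C'$; rescaling by these nonzero constants gives a triple realizing condition (i) of $(GF(q),+) \rightarrow m$, minimality of $S$ converts directly to the maximality clause, and $|S| = |A|+|B|+|C|+1 = 3q+1-m$ produces case (iii). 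I expect the main obstacle to be not any single calculation but the bookkeeping required to match the configuration-side conditions (blocking through each vertex, tangency through an arbitrary $u \in A_i$, forced inclusion of the pencil point $P$) with the abstract conditions of $G \rightarrow m$ in each subcase, especially the triangle subcase with exactly one vertex in $S$, where $|S|=2q$ looks ``out of family'' and the tangent argument must pin $|A_1|$ to exactly one.
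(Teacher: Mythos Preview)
Your proposal is correct and follows essentially the same line as the paper: split on concurrent versus triangle, do a vertex case analysis in the triangle configuration, and translate blocking plus minimality into the conditions defining $G\rightarrow m$. The only organizational difference is that the paper extracts case~(i) ($|S|=2q$) as a special concurrent subcase (two lines each carrying $q$ points of $S$) and then refers back to it when the triangle has exactly two vertices outside $S$, whereas you obtain $|S|=2q$ directly inside the triangle analysis (exactly one vertex in $S$) via your tangent argument---the two descriptions pick out the same blocking set.
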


\begin{proof}
Clearly, the three lines which contain the blocking set $S$ either form a triangle or are concurrent.

 In the concurrent case, the point of concurrency must be in $S$ as otherwise we would have $q = 2$ and a projective plane of order 2 contains no non-trivial blocking sets. Suppose that two of the lines, say $\ell$ and $m$, each contain $q$ points of $S$ (the maximum possible) with $P = \ell \cap m \in S$ and $R$ and $Q$ the points not in $S$ on $\ell$ and $m$ respectively. The only line not blocked by an $S$ point of $\ell \cup m $ is $RQ$. There must exist one, and by minimality only one, point of $S$ other than $R$ or $Q$ on the line $RQ$. The line joining this point and $P$ is a third line which with $\ell$ and $m$ covers $S$. For this blocking set of index 3 we have $|S| = 2q$. To deal with the general case of concurrent lines, we can map $P \rightarrow (0,1,0)$, $\ell \rightarrow X = 0$, $m \rightarrow X = 1$ and the third line to the line at infinity, $ Z = 0$. Let the points of $S$ be described by $\{(0,a,1) \colon a \in A' \} \cup \{(1,-b,1) \colon b \in B'\} \cup \{(1,c,0) \colon c \in C'\} \cup \{(0,1,0)\}$. By the previous special case we can assume that at most one of the lines contains $q$ points of $S$, so we may assume that none of the sets $A',B'$ or $C'$ contains all $q$ elements of $GF(q)$, and thus their complements, $A, B$ and $C$ respectively, are non-empty. $S$ will be a blocking set provided that $-(a+b) \in C'$ whenever $a \notin A'$ and $b \notin B'$. That is, $0 \notin A+B+C$. With $|A| + |B| + |C| = m$, $|S| = q - |A| + q - |B| + q - |C| + 1 = 3q - m + 1 $.

 We now turn to the case that the three lines form a triangle. We will first consider the various possibilities when some of the vertices of the triangle are not in $S$. Assume that there is exactly one vertex, say $P$, which is not in $S$. The $q-1$ lines through $P$ other than the triangle sides must be blocked by distinct points on the third side of the triangle. The remaining two points on this third side are in $S$ by assumption, so this entire line is contained in $S$, a contradiction. If exactly two vertices are not in $S$ then the triangle sides opposite these points must contain $q$ points of $S$, and we are in the special case of the concurrent line situation. If the three vertices of the triangle are not in $S$ then the side of the triangle opposite any vertex must contain $q-1$ points of $S$. For this configuration, we have $|S| = 3(q-1)$. We may now assume that that the three vertices of the triangle are all in $S$. We can map these vertices to the points with coordinates $(1,0,0), (0,1,0)$ and $(0,0,1)$ and the remainder of the points of $S$ define the subsets $A$, $B$, and $C$ of $GF(q)^{\times}$ by $\{(-1,x,0) \colon x \notin A \} \cup \{ (0,-1,y) \colon y \notin B\} \cup \{(z,0,-1) \colon z\notin C\}$. As each side of the triangle must contain at least one point not in $S$, the sets $A,B$ and $C$ are all non-empty. The points $(-1,x,0), (0,-1,y)$ and $(z,0,-1)$ with $x \in A, y \in B$ and $z \in C$ are collinear (and hence on a line not blocked by the points of $S$) if and only if, $xyz = 1$. So, with $|A| + |B|+ |C| = m$, $S$ is a blocking set if and only if $(A, B,C)$ realizes $GF(q)^{\times} \rightarrow m$. In this case we have that $|S| = q-1 - |A| + q - 1 - |B| + q - 1 - |C| + 3 = 3q - m$.

\end{proof}

\section{R\'edei Blocking Sets of Index 3}

Our interest is in minimal \textit{R\'edei blocking sets} of index 3, that is, those minimal blocking sets in $PG(2,q)$ with $q+n$ points, achieving a maximum of $n$ points on a line called a \textit{R\'edei line}. A blocking set is \textit{minimal} if each of its points is \textit{essential}, that is, lies on a tangent line (1-secant) of the set. For these blocking sets we have $n \le q$, so the size of the blocking set is at most $2q$. In light of Corollary \ref{Cor:Structure} we are interested in the following cases of Theorem \ref{Th:index3}: (i), (ii) when $q = 3$, (iii) when $m = q+1$ and (iv) when $m = q$ or $m = q+1$. In all of these cases the R\'edei line contains either $n = q$ or $n = q-1$ points. We shall classify the R\'edei blocking sets of index 3 with $n = q$ or $q-1$ in this section and amplify the description of those falling under Corollary \ref{Cor:Structure} in the next section.

Index 3 blocking sets are by definition contained in three lines. These three lines can either be concurrent or form a triangle. We shall use the term \textit{triad} to refer to the concurrent line case. After introducing coordinates and using the fundamental theorem of projective geometry, we may assume without loss of generality that in the triad case the point of concurrency, necessarily in the blocking set since $q > 2$, has coordinates $(0,1,0)$, the R\'edei line has equation $z = 0$, the remaining two lines have affine equations $x = 0$ and $x = 1$, and the point $(1,0,0)$ is not in the blocking set. For the triangle case we can take as our standard configuration, the R\'edei line as $z=0$, the other two lines with affine coordinates $x=0$ and $y=0$, and the point $(1,1,0)$ not in the blocking set. Note that in the triangle case, the points $(0,1,0), (1,0,0)$ and $(0,0,1)$ are all in the blocking set.

Since a blocking set contains no line, the R\'edei line must contain at least one point which is not in the blocking set. Each of the other $q$ lines through a point of the R\'edei line not in the blocking set must be blocked by points on the two other lines of the configuration. Let $\mathcal{A}$ denote the affine points of the blocking set on the line $x = 0$ in either case, that is $\mathcal{A} = \{(0,a)\colon a \in A\}$ where $A \subseteq GF(q)$. The remaining points of the blocking set will be denoted by $\mathcal{B}$ and is either a subset of points on the line $x = 1$ in the triad case, or a subset of points on $y = 0$ in the triangle case.

We first consider the triad case where, $\mathcal{B} = \{(1,b)\colon b \in B\}$ with $B = GF(q)\setminus A$. The triad configuration is stabilized by any $((\infty), y=k)$-homology, so we may assume that $0 \in A$ and if $|A| \ge 2$ that $1 \in A$ as well (see Lemma \ref{Lm:triadinterchange} below). If the R\'edei line contains $q$ points of the minimal blocking set, each of these points must be on a line determined by a point of $\mathcal{A}$ and a point of $\mathcal{B}$ since otherwise the point would not be an essential point of the blocking set. More specifically, define $f: A \times B \mapsto GF(q)^*$ by $f(a,b) = b - a$. Points $(0,a)$ of $\mathcal{A}$ and $(1,b)$ of $\mathcal{B}$ determine a line meeting $\ell_{\infty}$ (the R\'edei line) at the point $(1,f(a,b),0)$. Since $0 \in A$ we have $B \subseteq Im(f)$. As $A$ and $B$ are disjoint, $0 \not\in Im(f)$.

\begin{lem} \label{Lm:Triad}
If $a_0 \in A \setminus\{0\}$ then $a_0 \in Im(f)$  if and only if $\exists a_0^* \in A \setminus\{0\}$ such that $a_0 + a_0^* \not \in A$.
\end{lem}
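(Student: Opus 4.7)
The plan is to unpack the definitions of $\text{Im}(f)$ and $B$ and match them against the condition on $A$. By construction, $f(a,b) = b - a$ with $a \in A$, $b \in B = GF(q) \setminus A$, so saying $a_0 \in \text{Im}(f)$ is the same as asserting the existence of $a \in A$ such that $a + a_0 \in B$, i.e., $a + a_0 \notin A$. The only subtlety is to rule out the trivial choice $a = 0$ in the forward direction so that the witness actually lives in $A \setminus\{0\}$.

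For the forward direction, I would suppose $a_0 = f(a,b) = b - a$ for some $a \in A$ and $b \in B$. If $a = 0$ then $b = a_0$, but $a_0 \in A$ by hypothesis and $b \notin A$ by choice, a contradiction. Hence $a \in A \setminus\{0\}$, and setting $a_0^* := a$ gives $a_0 + a_0^* = b \in B$, so $a_0 + a_0^* \notin A$, as required.

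For the converse, given $a_0^* \in A \setminus\{0\}$ with $a_0 + a_0^* \notin A$, set $b := a_0 + a_0^*$. Then $b \in B$ and the pair $(a_0^*, b) \in A \times B$ satisfies $f(a_0^*, b) = b - a_0^* = a_0$, so $a_0 \in \text{Im}(f)$.

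Both directions are essentially immediate translations of the definition, so I do not anticipate a serious obstacle; the only point that requires care is excluding $a = 0$ in the forward step, which uses only the disjointness $A \cap B = \emptyset$ together with $a_0 \in A$.
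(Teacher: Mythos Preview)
Your proof is correct and follows essentially the same approach as the paper: both simply unpack the definition of $\text{Im}(f)$ and use that $B = GF(q)\setminus A$. The paper argues via the contrapositive (showing $a_0\notin\text{Im}(f)$ iff $a_0+a\in A$ for all $a\in A$) and leaves the exclusion of $a=0$ implicit, whereas you prove both implications directly and are more explicit about why the witness $a_0^*$ can be taken nonzero.
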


\begin{proof} We have that $a_0 \not\in Im(f)$ if and only if $\nexists (a,b) \in A \times B$ such that $a_0 = b - a$. Thus, $b \neq a_0 + a$ for any $a \in A$, so $a_0 + a \in A, \forall a \in A$.
\end{proof}

\begin{lem} \label{Lm:triadinterchange}
In the triad case, the group stabilizing the configuration acts transitively on the set of ordered pairs of affine points, both of which lie on $x = 0$ or $x = 1$.
\end{lem}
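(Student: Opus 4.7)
The plan is to exhibit a subgroup of the stabilizer of the triad configuration that acts transitively on the set in question. The configuration consists of the concurrent point $(0,1,0)$, the R\'edei line $z=0$, the two affine lines $x=0$ and $x=1$, and the distinguished point $(1,0,0)$ on the R\'edei line that lies outside the blocking set. I will exhibit three explicit families of collineations and check each lies in this stabilizer: the elations $\varphi_\mu:(X,Y,Z)\mapsto(X,Y+\mu Z,Z)$ with $\mu\in GF(q)$, the $((\infty),y=0)$-homologies $\psi_\lambda:(X,Y,Z)\mapsto(X,\lambda Y,Z)$ with $\lambda\in GF(q)^{\times}$, and the involution $\sigma:(X,Y,Z)\mapsto(Z-X,Y,Z)$. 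The verification is routine: each of the three maps fixes $(0,1,0)$ and $(1,0,0)$, preserves the R\'edei line $z=0$ setwise, and carries the pair of lines $\{x=0,\,x=1\}$ to itself, with $\varphi_\mu$ and $\psi_\lambda$ preserving each of these lines individually and $\sigma$ interchanging them.

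The second step is to compute the induced action on affine points. The composition $\psi_\lambda\circ\varphi_\mu$ sends $(0,a)$ to $(0,\lambda a+\mu)$ and $(1,b)$ to $(1,\lambda b+\mu)$, so the subgroup generated by the $\varphi_\mu$ and $\psi_\lambda$ projects onto the full affine group $AGL(1,q)$ on each of the two lines (acting identically on the $y$-coordinates of the two lines). Since $AGL(1,q)$ is sharply $2$-transitive on $GF(q)$, any ordered pair of distinct affine points on $x=0$ can be carried to any other such pair on $x=0$, and similarly for ordered pairs on $x=1$.

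Finally, $\sigma$ sends $(0,a)$ to $(1,a)$ and $(1,b)$ to $(0,b)$, so it transports any ordered pair lying on $x=0$ to an ordered pair on $x=1$ and vice versa. Combined with the transitivity established on each individual line, this merges the ``both on $x=0$'' and ``both on $x=1$'' orbits into the single orbit claimed in the lemma. The only real work is the configuration-preservation check in step one; once the three stabilizer elements are in hand, the desired transitivity is immediate from the sharp $2$-transitivity of $AGL(1,q)$ on $q$ points together with the line-swapping action of $\sigma$.
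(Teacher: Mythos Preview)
Your proof is correct and follows essentially the same approach as the paper: both exhibit the involution $(x,y,z)\mapsto(z-x,y,z)$ (your $\sigma$, the paper's $g$) to swap the lines $x=0$ and $x=1$, and both use central collineations with center $(0,1,0)$ to obtain the sharply $2$-transitive $AGL(1,q)$-action on the $y$-coordinates of each line. The only cosmetic difference is that you generate this affine group with the elations $\varphi_\mu$ together with the homologies $\psi_\lambda$ having axis $y=0$, whereas the paper uses two families of $((0,1,0),\,y=k)$-homologies with different axes to move one point and then the other.
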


\begin{proof}
The collineation $g$ given by
\begin{equation}
           (x,y,z) \mapsto (x,y,z)
           \left (
           \begin{matrix}
           -1 & 0 & 0 \\
           0 & 1 & 0 \\
           1 & 0 & 1
           \end{matrix}
           \right )
\end{equation}
fixes $(0,1,0)$ and $(1,0,0)$, stabilizes $z = 0$ and interchanges $x = 0$ and $x = 1$ in either characteristic.

Let $((x,a),(x,b))$ and $((x,c),(x,d))$ be two pairs of points on the same line (with $x = 0$ or $1$). Let $H$ and $K$ be homology groups with center $(0,1,0)$ and axes $y = c$ and $y = n$ respectively. These homology groups stabilize the triad configuration and stabilize the lines $x = 0$ and $x = 1$ individually, while acting transitively on the affine points of these lines. Apply an homology $k \in K$ so that $(x,a) \mapsto (x,c)$. Using an homology $h\in H$, map $(x,b)^k \mapsto (x,d)$, noting that $(x,c)$ is fixed under $h$. The composition of these two homologies maps the first pair of points onto the second pair.

If the two pairs of points lie on different lines, first apply collineation $g$ and then use the homologies as above.
\end{proof}

\begin{prop} \label{Pr:Triad}
In the triad case, with the notation of this section, the R\'edei line contains $q$ points of the blocking set if and only if for each non-zero $a_0 \in A$, there exists $a_0^* \in A \setminus\{0\}$ such that $a_0 + a_0^* \not \in A$. Furthermore, the R\'edei line contains $q-1$ points of the blocking set if and only if $q$ is even and $A$ is a proper union of additive cosets of the subgroup $\langle 0,1\rangle$.
\end{prop}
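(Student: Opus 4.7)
For part one, the plan is to convert the R\'edei-line count into an essentiality condition and then invoke Lemma \ref{Lm:Triad}. Since the R\'edei line has $q+1$ points, with $(0,1,0) \in S$ and $(1,0,0) \notin S$, it contains $q$ points of $S$ iff every $(1,t,0)$ with $t \in GF(q)^*$ belongs to $S$. Each such point is essential iff some line $y = tx + c$ through it is a tangent; analyzing the intersections with the configuration lines $x = 0$ and $x = 1$ forces $c \in B$ and $c + t \in A$, so essentiality of $(1,t,0)$ is equivalent to $t \in A - B$. The R\'edei-$q$ condition thus becomes $A - B \supseteq GF(q)^*$, equivalently $Im(f) \supseteq GF(q)^*$ (since $Im(f) = B - A$ and negation is a bijection of $GF(q)^*$). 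Because $0 \in A$ already gives $B \subseteq Im(f)$, the residual requirement is $A \setminus \{0\} \subseteq Im(f)$, which by Lemma \ref{Lm:Triad} is exactly the stated condition. The converse reverses these steps; the candidate $S = \mathcal{A} \cup \mathcal{B} \cup \{(0,1,0)\} \cup \{(1,t,0) : t \in GF(q)^*\}$ is a blocking set because $A \cup B = GF(q)$, and minimality is verified point by point as above.

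For part two ($\Rightarrow$), the key object is the additive stabilizer $\mathrm{Stab}(A) = \{t \in GF(q) : A + t = A\}$. If R\'edei contains exactly $q-1$ points of $S$, there is a unique $t_0 \in GF(q)^*$ with $(1,t_0,0) \notin S$. Blocking of the pencil $y = t_0 x + c$ without this point forces $c \in B \Rightarrow c + t_0 \in B$, hence $B + t_0 = B$, $A + t_0 = A$, and $t_0 \in \mathrm{Stab}(A)$. The remaining R\'edei points $(1,t,0) \in S$ must still be essential, which by part one requires $t \in A - B$; but for any $k \in \langle t_0 \rangle \setminus \{0\}$ the equation $a = b + k t_0$ forces $a \in B + \langle t_0 \rangle = B$, contradicting $a \in A$, so $(A - B) \cap \langle t_0 \rangle = \emptyset$. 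Squeezing $GF(q)^* \setminus \{t_0\} \subseteq A - B \subseteq GF(q) \setminus \langle t_0 \rangle$ now forces $\langle t_0 \rangle \subseteq \{0,t_0\}$, so the additive order of $t_0$ is $2$: $q$ is even and $\langle t_0 \rangle = \{0,t_0\}$. The homology $(x,y,z) \mapsto (x, t_0^{-1} y, z)$, which stabilizes the triad configuration, then normalizes $t_0$ to $1$, so $A$ becomes a non-empty proper union of cosets of $\langle 0,1 \rangle$ with $\mathrm{Stab}(A) = \langle 0,1 \rangle$ exactly (a strictly larger stabilizer would render further R\'edei points redundant and drive $n$ below $q-1$).

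The converse $(\Leftarrow)$ is a direct verification: given such an $A$, take $S = \mathcal{A} \cup \mathcal{B} \cup \{(0,1,0)\} \cup \{(1,t,0) : t \in GF(q)^* \setminus \{1\}\}$. Blocking of $y = x + c$ uses $A + 1 = A$ (so $c \in A$ or $c + 1 \in B$), and the identity $A - B = GF(q) \setminus \mathrm{Stab}(A) = GF(q) \setminus \{0,1\}$, which follows from $\mathrm{Stab}(A) = \langle 0,1 \rangle$ by a short finiteness argument, supplies a tangent through every $(1,t,0) \in S$. The main obstacle I anticipate is the squeeze step in $(\Rightarrow)$ that pins $\langle t_0 \rangle$ to order $2$: it requires wielding the blocking constraint $t_0 \in \mathrm{Stab}(A)$ and the minimality constraint $(A - B) \cap \langle t_0 \rangle = \emptyset$ together to force $\mathrm{char}(GF(q)) = 2$. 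Once that structural fact is extracted, the homology normalization to $t_0 = 1$ and the converse verification are routine.
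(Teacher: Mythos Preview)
Your argument is correct and follows the same overall strategy as the paper: translate the R\'edei-line count into the condition $Im(f)=GF(q)^*$ (equivalently $A-B=GF(q)^*$) via Lemma~\ref{Lm:Triad}, and for the second part show that the exceptional slope forces $A$ to be closed under translation by a nonzero element, whence the cyclic subgroup it generates has order~$2$ and the characteristic is~$2$. The paper phrases part two slightly differently---it keeps the extra R\'edei point in $S$, notes it is non-essential, and uses the equality of tangent and $3$-secant counts through a R\'edei point to get $a_1\notin Im(f)$ before invoking Lemma~\ref{Lm:Triad}---whereas you drop the point and argue directly from the blocking condition that $t_0\in\mathrm{Stab}(A)$; your route sidesteps the tangent/$3$-secant count and packages the structure in the stabilizer. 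You also supply the converse of the second statement explicitly (via the identity $A-B=GF(q)\setminus\mathrm{Stab}(A)$ and the observation that a strictly larger stabilizer would drop $n$ below $q-1$), which the paper's proof leaves implicit.
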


\begin{proof}
By Lemma \ref{Lm:Triad} the condition implies that all non-zero elements of $A$ are in $Im(f)$, thus we have that $Im(f)$ contains all non-zero elements of $GF(q)$. The point $(0,1,0)$ is also an essential point of the blocking set since $q > 2$, and this gives $q$ essential points of the blocking set on the R\'edei line.

If there is a non-essential point $P$ of the blocking set on the R\'edei line, then $P$  could be on no 3-secant line of the blocking set, since the number of 3-secant lines through $P$ equals the number of tangent lines through $P$. Thus, there would exist a nonzero $a_1 \in A$ which is not in $Im(f)$. By Lemma \ref{Lm:Triad}, we would have that $a_1 + a \in A, \forall a \in A$. In particular, each element of the additive subgroup $\langle a_1\rangle$ would be in $A$ and would have the same property as $a_1$. By the same Lemma, none of these elements would be in $Im(f)$. Under the assumption that the R\'edei line contains $q-1$ essential blocking set points, the size of this subgroup must be two, i.e. $2a_1 = 0$, and so the field must have characteristic 2. Since $A$ is closed under the addition of $a_1$, it must be a union of cosets of $\langle 0,a_1 \rangle$. $A$ may not be empty, nor equal to $GF(q)$.  By Lemma \ref{Lm:triadinterchange} we may take $a_1 = 1$ without any loss of generality.
\end{proof}

We now turn to the triangle case. We are assuming that the point $(1,1,0)$ on the R\'edei line is not in the blocking set. The origin, $(0,0,1)$, is an essential point of the blocking set and we will take the set $A$ as a subset of $GF(q)^* = GF(q)\setminus\{0\}$. To block the remaining $q-1$ lines through $(1,1,0)$ we require that $\mathcal{B} = \{(b,0)\colon b \in B\}$ where $B = GF(q)^* \setminus \{-A\}$. As neither $A$ or $B$ can be empty, the points $(0,1,0)$ and $(1,0,0)$ are essential points of the blocking set on the R\'edei line. Other points of the blocking set on the R\'edei line are essential provided they lie on a 3-secant blocking set since the number of tangent lines through the point equals the number of 3-secant lines through that point. Define $h: A \times B \mapsto GF(q)^*$ by $h(a,b) = - \frac{a}{b}$. Points $(0,a)$ of $\mathcal{A}$ and $(b,0)$ of $\mathcal{B}$ determine a line meeting $\ell_{\infty}$ (the R\'edei line) at the point $(1,h(a,b),0)$. As $-A$ and $B$ are disjoint, $1 \not\in Im(h)$. By using a $((0,0), \ell_{\infty})$-homology, we can assume that $-1 \not\in A$ and so $1 \in B$. Thus, we have $-A \subseteq Im(h)$.

\begin{lem} \label{Lm:Triangle}
If $b_0 \in B \setminus\{1\}$ then $b_0 \in Im(h)$  if and only if there exists $b_0^* \in B \setminus\{1\}$ such that $b_0 b_0^* \not \in B$.
\end{lem}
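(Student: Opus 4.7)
The plan is to mirror the proof of Lemma \ref{Lm:Triad} in multiplicative language, exploiting the fact that $-A$ and $B$ partition $GF(q)^{\ast}$ (since $B = GF(q)^{\ast}\setminus\{-A\}$ and $0 \notin A$).

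First I would unpack the negation. By definition, $b_0 \notin Im(h)$ means there is no pair $(a,b) \in A \times B$ with $b_0 = -a/b$, equivalently, there is no $b \in B$ with $-b_0 b \in A$, that is, $b_0 b \notin -A$ for every $b \in B$. Since $-A$ is precisely the complement of $B$ inside $GF(q)^{\ast}$, this is equivalent to
\begin{equation*}
   b_0 b \in B \quad \text{for every } b \in B,
\end{equation*}
i.e.\ $b_0 B \subseteq B$. Taking the contrapositive, $b_0 \in Im(h)$ if and only if there exists $b \in B$ with $b_0 b \notin B$.

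Next I would upgrade the witness $b$ to lie in $B \setminus \{1\}$. Recall that $1 \in B$ by the normalization made just before the lemma. For $b = 1$ we have $b_0 b = b_0 \in B$ by hypothesis, so $b = 1$ can never serve as a witness to $b_0 b \notin B$. Consequently the existence statement ``there exists $b \in B$ with $b_0 b \notin B$'' is equivalent to ``there exists $b_0^{\ast} \in B \setminus \{1\}$ with $b_0 b_0^{\ast} \notin B$,'' which is exactly the stated condition.

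There is no real obstacle here; the lemma is the direct multiplicative translation of Lemma \ref{Lm:Triad}, with the role of the additive partition $A \sqcup B = GF(q)$ replaced by the multiplicative partition $(-A) \sqcup B = GF(q)^{\ast}$, and with the role of the additive identity $0 \in A$ replaced by $1 \in B$. The only point requiring attention is the restriction $b_0^{\ast} \neq 1$, which is handled freely once one observes that $b_0 \cdot 1 = b_0 \in B$.
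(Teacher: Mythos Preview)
Your proof is correct and follows essentially the same approach as the paper: both argue via the contrapositive that $b_0 \notin Im(h)$ is equivalent to $b_0 B \subseteq B$, using that $-A$ and $B$ partition $GF(q)^{\ast}$. Your version is in fact slightly more complete, since you explicitly justify why the witness $b_0^{\ast}$ may be taken in $B \setminus \{1\}$ (because $b_0 \cdot 1 = b_0 \in B$ can never witness failure), a point the paper leaves implicit.
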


\begin{proof} We have $b_0 \not\in Im(h)$ if and only if there does not exist an $(a,b) \in A \times B$ such that $b_0 = - \frac{a}{b}$. Thus, $bb_0 \neq - a$ for any $b \in B$, so $bb_0 \in B, \forall b \in B$.
\end{proof}

\begin{prop} \label{Pr:Triangle}
In the triangle case, with the notation of this section, the R\'edei line contains $q$ points of the blocking set if and only if for each non-identity $b_0 \in B$, there exists $b_0^* \in B \setminus\{1\}$ such that $b_0b_0^*  \not \in B$. Furthermore, the R\'edei line contains $q-1$ points of the blocking set if and only if $q$ is odd and $B$ is a proper union of multiplicative cosets of the subgroup $\langle 1,-1\rangle$ of $GF(q)^*$.
\end{prop}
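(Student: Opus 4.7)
The plan is to follow the strategy of Proposition~\ref{Pr:Triad} almost verbatim, replacing the additive map $f$ by the multiplicative map $h$, the additive subgroup $\langle 0,1\rangle$ of $(GF(q),+)$ by the multiplicative subgroup $\langle 1,-1\rangle$ of $GF(q)^*$, and invoking Lemma~\ref{Lm:Triangle} in place of Lemma~\ref{Lm:Triad}.

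First I would count the blocking set points on the R\'edei line: these are the two vertices $(0,1,0)$, $(1,0,0)$ together with the points $(1,c,0)$ for $c\in Im(h)$, since by the tangent--$3$-secant correspondence a point $(1,c,0)$ with $c\in GF(q)^*\setminus\{1\}$ is essential if and only if $c\in Im(h)$. Hence the R\'edei line carries $2+|Im(h)|$ points of the blocking set. Using that $1\in B$ forces $-A\subseteq Im(h)$ via $h(a,1)=-a$, together with the disjoint decomposition $GF(q)^*\setminus\{1\} = -A\sqcup(B\setminus\{1\})$ coming from $B=GF(q)^*\setminus -A$ and $-1\notin A$, the R\'edei line carries $q$ blocking set points if and only if $B\setminus\{1\}\subseteq Im(h)$. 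Applying Lemma~\ref{Lm:Triangle} pointwise converts this into the stated claim for part~(i).

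For the $q-1$ case we need $|Im(h)|=q-3$, so exactly one element $b_1$ of $GF(q)^*\setminus\{1\}$ is missing from $Im(h)$, and since $-A\subseteq Im(h)$ this element lies in $B\setminus\{1\}$. By Lemma~\ref{Lm:Triangle}, $b_1 B\subseteq B$, and finiteness plus invertibility upgrade this to $b_1 B=B$; hence the cyclic subgroup $\langle b_1\rangle$ multiplicatively stabilizes $B$, lies inside $B$ (because $1\in B$), and each of its non-identity elements is likewise absent from $Im(h)$. Uniqueness of $b_1$ forces $|\langle b_1\rangle|=2$, so $b_1=-1$ and $q$ is odd, and $B$ is a proper union of cosets of $\langle -1\rangle$. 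The converse is obtained by reversing: $1\in B$ and $\langle -1\rangle$-closure give $-1\in B$ and $-B=B$, so Lemma~\ref{Lm:Triangle} yields $-1\notin Im(h)$, and consequently $|Im(h)|\le q-3$.

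The delicate step is pinning down this converse as an exact equality: ruling out any further $b_0\in B\setminus\{1,-1\}$ with $b_0 B=B$ amounts to insisting that $B$ is not stabilized by a subgroup strictly larger than $\langle -1\rangle$. I would read the phrase ``$B$ is a proper union of multiplicative cosets of $\langle 1,-1\rangle$'' to incorporate precisely this maximality (mirroring the analogous reading required in Proposition~\ref{Pr:Triad}), and I would record this interpretation explicitly at the outset of the proof.
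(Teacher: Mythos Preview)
Your approach is essentially the paper's: both arguments hinge on Lemma~\ref{Lm:Triangle}, identify the essential points on the R\'edei line with $\{(0,1,0),(1,0,0)\}\cup\{(1,c,0):c\in Im(h)\}$, and then for the $q-1$ case trace a missing element $b_1\in B\setminus\{1\}$ to the cyclic subgroup $\langle b_1\rangle\subseteq B$ whose non-identity elements all fail to lie in $Im(h)$, forcing $|\langle b_1\rangle|=2$ and hence $b_1=-1$, $q$ odd. Your write-up is somewhat more explicit (the count $2+|Im(h)|$ and the disjoint decomposition $GF(q)^*\setminus\{1\}=(-A)\sqcup(B\setminus\{1\})$), but structurally there is no real difference.

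Where you go beyond the paper is in flagging the converse of the $q-1$ statement. The paper's own proof, like that of Proposition~\ref{Pr:Triad}, establishes only the forward implication: \emph{if} the R\'edei line carries exactly $q-1$ essential points, \emph{then} $B$ is a proper union of $\langle 1,-1\rangle$-cosets. You correctly observe that the literal converse can fail---if $B$ happens also to be closed under a strictly larger subgroup, then more than one element of $B\setminus\{1\}$ is excluded from $Im(h)$ and the R\'edei line drops below $q-1$---and you propose to resolve this by reading ``proper union of cosets of $\langle 1,-1\rangle$'' as carrying an implicit maximality. That reading is indeed what is needed, and recording it explicitly is a genuine improvement over the paper's treatment; just be aware that the paper itself leaves this direction unargued, so you are not deviating from the paper's method so much as filling a small gap it shares with Proposition~\ref{Pr:Triad}.
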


\begin{proof}
By Lemma \ref{Lm:Triangle} the condition implies that all elements of $B$ other than $1$ are in $Im(h)$, thus we have that $Im(h)$ contains all non-identity elements of $GF(q)^*$. As the points $(0,1,0)$ and $(1,0,0)$ are also essential points of the blocking set, this gives $q$ essential points of the blocking set on the R\'edei line.

If there is a non-essential point of the blocking set on the R\'edei line, there would exist a  $b_1 \in B \setminus \{1\}$ which is not in $Im(h)$. By Lemma \ref{Lm:Triangle}, we would have that $bb_1\in B, \forall b \in B$. In particular, each element of the multiplicative subgroup $\langle b_1\rangle$ would be in $B$ and would have the same property as $b_1$. By the same Lemma, none of these elements would be in $Im(h)$. Under the assumption that the R\'edei line contains $q-1$ essential blocking set points, the size of this subgroup must be two, i.e. $b_1^2 = 1$, and so $b_1 = -1$ and the field must have odd characteristic. Since $B$ is closed under the multiplication by $b_1$, it must be a union of the multiplicative cosets of $\langle 1,-1 \rangle$. $B$ may not be empty, nor equal to $GF(q)^*$.
\end{proof}

\begin{rem} \label{Rm:Triangle}
Recall that the collineation,
\begin{equation}
           (x,y,z) \mapsto (x,y,z)
           \left (
           \begin{matrix}
           0 & 1 & 0 \\
           1 & 0 & 0 \\
           0 & 0 & 1
           \end{matrix}
           \right )
\end{equation}
fixes $(0,0,1)$ and $(1,1,0)$, stabilizes $z = 0$ and interchanges $x = 0$ and $y=0$ in either characteristic.
\end{rem}


\section{Constructions}

As reported in Sz\"onyi, G\'acs and Weiner \cite{SzGaWe:03} we have a construction credited to Megyesi\footnote{The usual reference given for this is R\'edei \cite{LR:73}, but R\'edei states that Megyesi only gave an alternate, more complicated form and the additive subgroup case does not appear at all in the monograph. What does appear, in an example of the use of lacunary polynomials, is clearly isomorphic, but not identical, to the construction stated here.}, namely:
\begin{thm} \label{Th:Meg} Let $d$ be a divisor of $q-1$ and let $H$ be a multiplicative subgroup of size $d$. Consider the set $U = \{(0,h) \colon h \notin H \} \cup \{(g,0)\colon g \in H\}$. Then $U$ determines exactly $q+1-d$ directions. Similarly, if $d$ divides $q$, then using additive subgroups and two parallel lines (instead of the two axes) one can construct R\'edei type blocking sets of size $2q+1-d$ (again $q+1-d$ directions).
\end{thm}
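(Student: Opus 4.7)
The plan is to establish the direction count in each case by partitioning the unordered pairs of points of $U$ into those lying on each of the two distinguished lines and those straddling them; the bulk of the argument then reduces to a simple coset computation in $GF(q)^{\times}$ or in $GF(q)$.

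For the multiplicative case, write $U = U_1 \cup U_2$ with $U_1 = \{(0,h) : h \notin H\}$ and $U_2 = \{(g,0) : g \in H\}$. Pairs inside $U_1$ contribute the vertical direction $(0{:}1{:}0)$, and pairs inside $U_2$ (or equivalently any mixed pair consisting of the origin and a point $(g,0)$) contribute the horizontal direction $(1{:}0{:}0)$. The remaining mixed pairs $\{(0,h),(g,0)\}$ with $h \in GF(q)^{\times} \setminus H$ and $g \in H$ contribute the slope $-h/g$. The key observation is that the set $\{-h/g : h \in GF(q)^{\times} \setminus H,\ g \in H\}$ is invariant under multiplication by $H$, hence is a union of multiplicative cosets of $H$ in $GF(q)^{\times}$; the trivial coset is excluded because $-h \notin H$ (using $-H = H$ in either characteristic), and every other coset is hit, so this set equals $GF(q)^{\times} \setminus H$, producing $q - 1 - d$ further slopes. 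Combining with the vertical and horizontal directions yields $q + 1 - d$ directions in all.

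For the additive case with two parallel lines $y = 0$ and $y = 1$, I would take $U = \{(h,0) : h \in H\} \cup \{(k,1) : k \in GF(q) \setminus H\}$. Pairs on a common horizontal line yield only the horizontal direction. A mixed pair $\{(h,0),(k,1)\}$ produces slope $1/(k-h)$; for fixed $k \notin H$ the difference $k - h$ runs through the additive coset $k + H$, and as $k$ varies over $GF(q) \setminus H$ these cosets sweep out every nontrivial coset of $H$, so $k - h$ ranges over $GF(q) \setminus H$. Since $0 \in H$ all such differences are nonzero, so inversion yields $q - d$ distinct nonzero slopes; together with the horizontal direction this again totals $q + 1 - d$. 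Adjoining these directions to $U$ on $\ell_{\infty}$ produces a R\'edei-type set of size $q + (q + 1 - d) = 2q + 1 - d$, and the standard R\'edei argument confirms it blocks every line: for any direction $\delta$, the $q$ affine lines of that parallel class either each meet $U$ in a single point (when $\delta \notin D$, by counting) or else they are blocked at infinity by $\delta$ itself. The main care in the write-up lies in the two coset identifications and in the mild asymmetry of the multiplicative case, where slope $0$ and the horizontal direction coincide, requiring one to avoid double-counting; the degenerate endpoint $d = q - 1$ (where $U$ collapses to a single line) is implicitly excluded.
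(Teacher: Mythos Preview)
Your direct slope computation is sound in outline and takes a genuinely different route from the paper, but one step needs correction. You assert that $-H = H$ ``in either characteristic''; this fails whenever $q$ is odd and $d$ is odd (for instance $q=7$, $H=\{1,2,4\}$, where $-H=\{3,5,6\}$). The set of mixed slopes $\{-h/g : h \in GF(q)^{\times}\setminus H,\ g\in H\}$ is indeed a union of cosets of $H$, but the coset it omits is $-H$, not $H$: if $s\in -H$ then $-s\in H$ and $-h/g=s$ forces $h=-sg\in H$; conversely for $s\notin -H$ one may take $g=1$, $h=-s\notin H$. Since $-H=(-1)H$ is in any case a single coset, the cardinality is still $q-1-d$ and your direction count survives; only the identification of \emph{which} $d$ directions go missing is off. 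Your additive argument is clean as written (there $-H=H$ genuinely holds).

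The paper does not argue this way at all. It obtains Theorem~\ref{Th:Meg} as a special case of the more general Theorem~\ref{Th:const}, whose proof simply notes that with $B = GF(q)^{\ast}\setminus B_1$ the triple $(A,B,C)$ realizes $GF(q)^{\ast}\rightarrow q-1+d$ (respectively $(GF(q),+)\rightarrow q+d$ in the additive case) and then invokes Theorem~\ref{Th:index3}(iv) (respectively (iii)) to read off the blocking-set size $2q+1-d$. So the paper leans on the Cameron $G\rightarrow m$ machinery developed in Section~1, while your argument is self-contained and elementary. Your approach has the virtue of making the determined directions explicit; once the $-H$ slip is repaired, the excluded directions you find agree with the set $W$ described in Theorem~\ref{Th:const} (taking $g_0=1$, $g_1=-1$ there gives $C=(g_0/g_1)H=-H$).
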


This is a special case of a slightly more general construction:
\begin{thm} \label{Th:const} Let $d$ be a divisor of $q-1$ and let $H$ be a multiplicative subgroup of $GF(q)$ of size $d$. Let $A = g_0 H$ and $B_1 = g_1 H$ be arbitrary cosets of $H$ in $GF(q)^*$. Consider the set $U = \{(0,h,1) \colon h \notin A \} \cup \{(-g,0,1)\colon g \in B_1\}$. Then $U$ determines exactly $q+1-d$ directions. In particular these directions correspond to all the points on the line at infinity ($Z = 0$) except those in  $W = \{(1,c,0) \colon c \in C \}$ where $C = \frac{g_0}{g_1} H$.  Denote this set of points by $W_1$. The R\'edei type blocking set, $U \cup W_1$ has size $2q+1-d$. Similarly, if $d$ divides $q$, then using cosets of an additive subgroup of order $d$ and two parallel lines (instead of the two axes) one can again construct R\'edei type blocking sets of size $2q+1-d$.
\end{thm}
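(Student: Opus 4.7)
The plan is to work in the triangle configuration with R\'edei line $z = 0$ and the coordinate axes $x = 0$ and $y = 0$ as the other two lines. The set $U$ consists of $q - d$ affine points on $x = 0$ (those $(0, h, 1)$ with $h \in GF(q) \setminus g_0 H$) and $d$ affine points on $y = 0$ (those $(-g, 0, 1)$ with $g \in g_1 H$), so $|U| = q$. I would compute the set of directions in $\ell_\infty$ spanned by pairs of points of $U$, identify it with $W_1$, and then verify that $U \cup W_1$ is a blocking set.

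Pairs on a common axis contribute the directions $(0,1,0)$ and $(1,0,0)$; the latter is also realized as a cross-pair direction via $h = 0$, which cleanly handles the edge case $d = 1$. A cross pair $(0, h, 1)$, $(-g, 0, 1)$ spans a line of slope $h/g$, so the central identity I need is
\[
\{\, h/g : h \in GF(q) \setminus g_0 H,\ g \in g_1 H \,\} = GF(q) \setminus C,
\]
where $C = (g_0/g_1) H$. The coset structure does the work here: for $c \in GF(q)^*$, the equation $c = h/g$ with $g \in g_1 H$ forces $h \in c g_1 H$, a coset of $H$, which avoids $g_0 H$ iff the two cosets are distinct, i.e., $c \notin C$; and $c = 0$ is realized via $h = 0$. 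So the determined directions are $\{(0,1,0)\} \cup \{(1,c,0) : c \notin C\}$, totaling $q + 1 - d$, and the non-determined points of $\ell_\infty$ are exactly $W = \{(1,c,0) : c \in C\}$.

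To see that $U \cup W_1$ is a blocking set, it suffices to check affine lines whose direction lies in $W$, since the R\'edei line is blocked by $W_1$ and every other affine line is blocked at its point at infinity in $W_1$. For $c \in C$, the line $y = cx + k$ meets $(0, k, 1) \in U$ when $k \notin g_0 H$, and meets some $(-g, 0, 1) \in U$ (with $k = cg$) when $k \in c g_1 H$. Because $c \in C$ forces $c g_1 H = g_0 H$, these two conditions cover every $k \in GF(q)$, so the line is blocked. The size count $|U| + |W_1| = q + (q + 1 - d) = 2q + 1 - d$ then completes the multiplicative part.

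For the additive version I would use the triad configuration with parallel lines $x = 0$ and $x = 1$, replace the multiplicative slope $h/g$ between points $(0, h, 1)$ and $(1, g, 1)$ by the difference $g - h$, and replace multiplicative cosets of $H$ by additive cosets of a subgroup $K \leq (GF(q), +)$ of order $d \mid q$. The same coset computation identifies the cross-determined slopes as $GF(q) \setminus ((g_1 - g_0) + K)$, and the blocking verification is identical. The main obstacle throughout is simply keeping the coset bookkeeping straight in the cross-slope computation; once that identity is secured, the blocking property reduces to a clean dichotomy on whether the intercept $k$ lies in the coset $A$.
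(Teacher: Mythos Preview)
Your argument is correct, but it proceeds along a genuinely different route from the paper's. The paper does not compute directions or verify the blocking property by hand; instead it sets $B = GF(q)^*\setminus B_1$, observes that the triple $(A,B,C)$ with $A=g_0H$, $C=(g_0/g_1)H$ realizes $GF(q)^*\rightarrow m=q-1+|H|$, and then simply invokes Theorem~\ref{Th:index3}(iv) to conclude that the associated triangle configuration is a blocking set of size $3q-m=2q+1-d$. The additive case is handled the same way via Theorem~\ref{Th:index3}(iii). In other words, the paper treats this theorem as an immediate corollary of the Cameron arrow framework already in place.

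Your approach bypasses that framework entirely: you compute the cross-slope set $\{h/g: h\notin g_0H,\ g\in g_1H\}$ directly via the coset identity $cg_1H=g_0H \iff c\in C$, read off the determined directions, and then check the blocking property line by line using the dichotomy on whether the intercept $k$ lies in $g_0H$. This is more elementary and entirely self-contained---it would stand even without Section~1 of the paper---and it makes the ``exactly $q+1-d$ directions'' claim in the theorem statement explicit rather than leaving it implicit in the arrow relation. The paper's route is shorter on the page but only because the real work was front-loaded into Theorem~\ref{Th:index3}; your route exposes the mechanism (coset arithmetic controlling the slope set) directly. Either is a complete proof.
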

\begin{proof}
Let $B = GF(q)^* \setminus B_1$. Then $(A, B, C)$ realizes $GF(q)^* \rightarrow m = q - 1 + |H|$, and so, by Theorem \ref{Th:index3} (iv), gives rise to a R\'edei type blocking set of size $3q-m = 2q + 1 -d$.

We turn now to the details of the additive case. Let $d$ be a divisor of $q$ and let $H$ be an additive subgroup of $GF(q)$ of size $d$. Let $A = g_0 + H$ and $B_1 = g_1 + H$ be arbitrary cosets of $H$ in $(GF(q),+)$. Let $U = \{(0,a,1) \colon a \notin A \} \cup \{(1,b,1) \colon b \in B_1\}$. Then $U$ determines exactly $q+1-d$ directions. In particular, these directions correspond to all the points on the line at infinity ($Z = 0$) except those in the set $W = \{(1,c,0) \colon c \in C \}$ where $C = (g_0 - g_1) + H$. Let $B = GF(q) \setminus B_1$. Then $(A,B,C)$ realizes $(GF(q),+) \rightarrow m = q + |H|$, and so, by Theorem \ref{Th:index3} (iii), gives rise to a R\'edei type blocking set, $U \cup W$, of size $ 3q + 1 - m = 2q+1-d$.
\end{proof}

Rephrasing this construction in a more usable form gives:

\begin{cor} \label{Co:any coset}
Let $d$ be a proper divisor of $q$ (resp. $q-1$) and $H$ a subgroup of $G =(GF(q),+)$ (resp. $G = (GF(q)^*,\cdot)$) of size $d$. Let $C$ be any coset of $H$, and $B$ be the complement in $G$ of any coset $D$ of $H$. Then there exists a coset $A$ of $H$ such that the union of the sets $\mathcal{A} = \{(0,a,1)\colon a \in A\}$, $\mathcal{B} = \{(1,b,1)\colon b \not\in D\}$ (resp., $\mathcal{B} = \{(b,0,1)\colon b \not\in D\}$) and $\mathcal{C} = \ell_{\infty} \setminus \{(1,c,0)\colon c \in C\}$ is a R\'edei type blocking set of size $2q+1-d$. \qed
\end{cor}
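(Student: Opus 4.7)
The plan is to deduce the corollary directly from Theorem \ref{Th:const} by applying a collineation that reconciles the two different parametrisations. In Theorem \ref{Th:const} the first affine line carries the \emph{complement} $\{(0,h,1): h\notin A\}$ of a single coset, while the other non-R\'edei line carries a single coset $B_1$; in the corollary the roles are reversed, with a single coset $A$ on the first line and the complement of a coset $D$ on the other. To pass from one parametrisation to the other, I would use the collineation of Remark \ref{Rm:Triangle} in the triangle (multiplicative) case, and the collineation $g$ of Lemma \ref{Lm:triadinterchange} in the triad (additive) case. Each is an involution that swaps the two non-R\'edei lines and stabilises the R\'edei line, so it carries a R\'edei-type blocking set of size $2q+1-d$ to another of the same kind, and sends cosets of $H$ to cosets of $H$.

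Given the $C$ and $D$ of the corollary, I would choose the inputs of Theorem \ref{Th:const} as follows; denote those inputs by $A'$ and $B_1$ to avoid clashing with the corollary's $A$. In the multiplicative case, take $A' = D$ and $B_1 = C\cdot D$ (the coset product in $GF(q)^*$), so that Theorem \ref{Th:const} produces missed-directions coset $A'/B_1 = C^{-1}$; the collineation of Remark \ref{Rm:Triangle} sends $(1,c,0)$ to $(1,c^{-1},0)$, inverting this coset back to $C$ as required. Then define $A := -B_1 = -(C\cdot D)$, which is again a coset of $H$. In the additive case, take $A' = D$ and $B_1 = C + D$, so that Theorem \ref{Th:const} gives missed-directions coset $A' - B_1 = -C$; the collineation $g$ sends $(1,c,0)$ to $(1,-c,0)$, sending this coset to $C$. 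Then define $A := B_1 = C + D$.

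The only real obstacle is bookkeeping: the signs, inverses, and coset products must be tracked so that the image of Theorem \ref{Th:const}'s blocking set under the chosen collineation agrees point for point with $\mathcal{A} \cup \mathcal{B} \cup \mathcal{C}$. For instance, in the multiplicative case one must verify that the image of $\{(-g,0,1) : g \in B_1\}$ under the swap of $x=0$ and $y=0$ is exactly $\{(0,a,1) : a \in A\}$ with $A = -B_1$, with similar checks on the other two lines; the additive case is parallel. Once these identifications are in place, the existence of the required coset $A$ and the R\'edei-blocking-set property of $\mathcal{A} \cup \mathcal{B} \cup \mathcal{C}$ of size $2q+1-d$ follow immediately from Theorem \ref{Th:const}.
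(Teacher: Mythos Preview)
Your approach is correct and, since the paper offers no proof at all (the corollary is stated with a bare \qed\ as a ``rephrasing'' of Theorem~\ref{Th:const}), your argument via the line-swapping collineations of Lemma~\ref{Lm:triadinterchange} and Remark~\ref{Rm:Triangle} is exactly the kind of explicit verification the paper leaves to the reader. The bookkeeping you outline checks out: in the multiplicative case the image of Theorem~\ref{Th:const}'s set under $(x,y,z)\mapsto(y,x,z)$ is precisely $\mathcal{A}\cup\mathcal{B}\cup\mathcal{C}$ with $A=-C\!\cdot\!D$, and in the additive case the collineation $g$ yields $A=C+D$; in both cases the missed-direction coset is inverted/negated by the collineation and lands on the prescribed $C$.
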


If in Theorem \ref{Th:Meg} $d = \frac{q-1}{2}$ then the constructed R\'edei blocking set is called a \textit{projective triangle}, while if $d = \frac{q}{2}$ it is called a \textit{projective triad}.

Propositions \ref{Pr:Triad} and \ref{Pr:Triangle} characterize the R\'edei blocking sets having $q$ points on the R\'edei line. We now present some constructions in this situation.

\begin{example} Let $q$ be an odd prime power greater than $3$ and fix $t < \frac{q-1}{2}$. With $\alpha$ a primitive element of $GF(q)$, define $A = \{{\alpha}^i\colon 1 \le i \le t\}$ and $B = GF(q)^* \setminus \{-A\}$.
\end{example}

\section{PG(2,7)}
 While there is considerable literature on the spectrum of sizes of blocking sets, only in the smallest planes have complete determinations of all blocking sets been made. Often one is satisfied if a construction is found for each size, but there is some work on proving non-existence or uniqueness for a given size blocking set. We shall investigate the blocking sets in $PG(2,7)$, with an eye towards classifying all the R\'edei blocking sets of index 3 in this plane. Much of the previous work in this plane has been carried out by Innamorati and Maturo \cite{InMa:91} and \cite{InMa:91a}.

 The theoretical limits for the size of a blocking set in $PG(2,7)$ range from 12 to 19. There are examples for each size (see \cite{SzGaWe:03}).  There are R\'edei blocking sets of sizes 12-14. There are two blocking sets of size 12, the projective triangle and a non-R\'edei blocking set. The four possible sets of this size constructed from Theorem \ref{Th:const} with $d = 3$ are all isomorphic (giving the projective triangle). In \cite{PJC:85} Cameron states that the constructions of Theorem \ref{Th:index3} provide 11 inequivalent blocking sets of size 14 in $PG(2,7)$. The blocking set of size 19 is unique.

 We shall show the uniqueness of the R\'edei blocking set of index 3 and size 13 and enumerate those of size 14.

 For a size 13 R\'edei blocking set, the R\'edei line contains $q-1$ of its points. By Proposition \ref{Pr:Triangle}, the set $B$ consists of a union of multiplicative cosets of $\langle 1, -1 \rangle$, that is, a proper subset of $\left\{\{1,6\},\{2,5\},\{3,4\}\right\}$. If $B$ contains only one coset then we may take it to be the subgroup $\langle 1,6\rangle$. If $B$ contains two cosets then by applying the collineation of Remark \ref{Rm:Triangle}, we interchange the roles of the sets $A$ and $B$. The slopes of the lines determined by the new sets $\mathcal{A}$ and $\mathcal{B}$ are the reciprocals of those determined by the old sets, so the set of points of the R\'edei blocking set on the R\'edei line remains the same. The new set $B$ will consist of just one coset and so, projectively equivalent to $\langle 1,6 \rangle$.

 The size 14 R\'edei blocking sets of index 3 are described by Propositions \ref{Pr:Triad} and \ref{Pr:Triangle}. We shall examine these sets in terms of $| A |$ in the triad case and $| B |$ in the triangle case.  For $| A | = 1$, we may assume that $A = \{0\}$, and for $| B | = 1$ we can take $B = \{1\}$ (the conditions are satisfied vacuously). These two cases are projectively equivalent. In the triad case, when $| A | = 2$ we may take $A = \{0,1\}$, by Lemma \ref{Lm:Triad}, and in the triangle case we may take $B = \{1,b\}$ with $b \neq -1$. There are two inequivalent blocking sets in this latter situation given by $b \in \{2,4\}$ or $b \in \{3,5\}$. When $| A | = 3$ there are two inequivalent blocking sets given by $A = \{0,1,5\}$ or $A = \{0,1,6\}$. With $| B | = 3$ there are two inequivalent blocking sets given by $B = \{1,2,3\}$ or $B = \{1,2,5\}$. Larger values of $| A |$ and $| B |$ give blocking sets that are projectively equivalent to those we have seen by Lemma \ref{Lm:triadinterchange} and Remark \ref{Rm:Triangle}. There are thus eight projectively inequivalent R\'edei blocking sets of index 3 of size 14 in $PG(2,7)$. This example implies that ``projective equivalence'' may be too fine an equivalence relation to be used in distinguishing these blocking sets.
 
 There are three size 14 non-equivalent non-R\'edei blocking sets of index 3 in $PG(2,7)$. In the triangle case we have $A = \{1,3\}, B =\{1,3\}$ and $C = \{2,3,6\}$, while in the triad case we have either $A = \{0,1\}, B = \{0,1\}$ and $C = \{1,2,3,4\}$ or $A = \{0,1\}, B = \{0,1,2\}$ and $C = \{1,2,3\}$. This accounts for all 11 examples reported by Cameron \cite{PJC:85}.

\end{document}